\setlist[enumerate,1]{label={(\alph*)}}
\newcommand{\calO}{\mathcal{O}}
 \newcommand{\R}{\mathbb{R}}
 \newcommand{\Q}{\mathbb{Q}}
 \newcommand{\Z}{\mathbb{Z}}
\newcommand{\longto}{\longrightarrow}
 \newcommand{\inj}{\hookrightarrow}
 \newcommand{\surj}{\twoheadrightarrow}
\let\oldforall\forall
\renewcommand{\forall}{\; \oldforall}
\let\oldexists\exists
\renewcommand{\exists}{\; \oldexists}
\newcommand{\la}{\langle}
\newcommand{\ra}{\rangle}
\newcommand{\bs}{\backslash}
\DeclareMathOperator{\Ad}{Ad}
\DeclareMathOperator{\tr}{tr}
\theoremstyle{plain} \newtheorem*{theorem*}{Theorem}
\theoremstyle{plain} \newtheorem*{conjecture*}{Conjecture}
\theoremstyle{plain} \newtheorem*{lemma*}{Lemma}
\theoremstyle{plain} \newtheorem*{corollary*}{Corollary}
\theoremstyle{plain} \newtheorem*{proposition*}{Proposition}
\theoremstyle{plain} \newtheorem{thm}{Theorem}[section]
\theoremstyle{plain} \newtheorem{theorem}[thm]{Theorem}
\theoremstyle{plain} 
\theoremstyle{plain} 
\theoremstyle{plain} 
\theoremstyle{definition} 
\theoremstyle{definition} 
\theoremstyle{plain} 
\theoremstyle{plain} 
\theoremstyle{plain} 
\theoremstyle{plain} 
\theoremstyle{plain} 
\theoremstyle{definition} 
\theoremstyle{definition} 
\theoremstyle{plain} 
\theoremstyle{plain} 
\theoremstyle{plain} 
\theoremstyle{plain} \newtheorem*{claim*}{Claim}
\theoremstyle{definition} \newtheorem{remark}[thm]{Remark}
\theoremstyle{definition} 
\theoremstyle{plain}
\theoremstyle{plain} \newtheorem*{theoreme*}{Theorème}
\theoremstyle{plain} 
\theoremstyle{plain} 
\theoremstyle{plain} \newtheorem*{lemme*}{Lemme}
\theoremstyle{plain} 
\theoremstyle{plain} \newtheorem*{corollaire*}{corollaire}
\theoremstyle{definition} 
\theoremstyle{plain} 
\theoremstyle{definition} 
\theoremstyle{definition}
\newcommand{\Hy}{\mathbf{H}}
\newcommand{\G}{\mathbf{G}}
\newcommand{\aPO}{\mathbf{PO}}
\renewcommand{\>}{\rangle}
\DeclareMathOperator{\Comm}{Comm}
\DeclareMathOperator{\Isom}{Isom}
\renewcommand\subsection{\@startsection{subsection}{1}%
  \z@{.5\linespacing\@plus.7\linespacing}{-.5em}%
  {\normalfont}}
\def\@seccntformat#1{%
  \protect\textup{\protect\@secnumfont
    \ifnum\pdfstrcmp{subsection}{#1}=0 \bfseries\fi
    \csname the#1\endcsname
    \protect\@secnumpunct
  }%
}  
\title[Non-commensurable manifolds with the same trace ring]%
{Non-Commensurable hyperbolic manifolds with the same trace ring}
\author{Olivier Mila}
\address{Centre de recherches mathématiques,
	Université de Montréal,\linebreak
  Pavillon André-Aisenstadt,
	Montréal, Québec, H3T~1J4, Canada}
  \email{olivier.mila@umontreal.ca, \textit{\fontfamily{\familydefault}\selectfont Web: }crm.umontreal.ca/\textasciitilde mila}
\begin{document}
\begin{abstract}
We prove that there are infinitely many pairwise non-commensurable hyperbolic $n$-manifolds that have the same ambient group and trace ring, for any $n \geq 3$.
The manifolds can be chosen compact if $n \geq 4$.
\end{abstract}
\thanks{This work is supported by the Swiss National Science Foundation, Project number \texttt{P2BEP2\_188144}}
\maketitle
\section*{} \label{sec:intro}

In the study of hyperbolic manifolds of dimension $n \geq 3$, the nicest family is (arguably) the family 
of arithmetic manifolds.
At the core of their definition are two objects of algebraic nature: a field $K$ and an algebraic group $\G$.
In simple terms, an arithmetic lattice is then essentially just $\G(\calO_K)$, and an 
arithmetic manifold the quotient of hyperbolic space by such a lattice.

In his 1971 article \cite{Vinberg}, Vinberg introduced similar objects for arbitrary Zariski-dense subgroups
(in particular, lattices) in semisimple Lie groups.
He defined the (adjoint) trace field (and ring) and the ambient group of such subgroups, and 
proved that they are invariant under commensurability.
These are the main algebraic invariants used to study hyperbolic $n$-manifolds of arbitrary dimension; for 
$n \geq 4$ see for instance~\cite{Emery-Mila}, and for $n = 3$ (and $n=2$) essentially equivalent invariants 
are thoroughly studied in \cite{MR}.
It is also an important tool when one is interested in proving nonarithmeticity and non-commensurability 
(see \cite{Mila18,Mila20}).

In the arithmetic case, these invariants completely determine the commensurability class
\cite[Prop.~2.5]{PR}.
In fact, even more is true: if a manifold $M_1$ has the same ambient group and trace ring as an 
arithmetic manifold $M_2$, then $M_1$ is also arithmetic and commensurable with $M_2$.
It is hence tempting to hope that it still holds in the general case, i.e., that the pair 
(ambient group, trace ring) determines the commensurability class of an arbitrary hyperbolic manifold.

Alas, we prove that this does not hold in general: Theorems~\ref{th:non-compact} and \ref{th:compact} below 
establish 
the existence of infinitely many pairwise non-commensurable hyperbolic manifolds with the same ambient group 
and trace ring, in the non-compact case for $n \geq 3$ and in the compact case for $n \geq 4$, respectively.
Their proof is based on the (now classical) construction of Agol-Belolipetsky-Thomson \cite{Agol, BT} of 
manifolds with short systole, and on the analysis of their trace ring carried out by the author in 
\cite{Mila18}.
Observe that the case $n = 2$ was already known (see \cite[Ex.~4.9.3]{MR}). 

The article is structured as follows: in Section~\ref{sec:background} we introduce the necessary background,
Section~\ref{sec:non-compact} is devoted to the non-compact case and Section~\ref{sec:compact} to the 
compact case.

\section{Background}\label{sec:background}

\subsection{} 
In this paper all manifolds are assumed to be hyperbolic, complete without boundary and of finite volume.
Equivalently, a manifold $M$ is a quotient $M = \Gamma \bs \Hy^n$ where $\Hy^n$ denotes 
hyperbolic $n$-space and $\Gamma$ is a torsion-free lattice in the Lie group $G =\Isom(\Hy^n)$.
We will use the so-called ``$f$-hyperboloid'' models for hyperbolic space,
defined as follows:
For a real quadratic form $f$ of signature $(n,1)$, let 
\[
    \Hy_f = \{x \in \R^{n+1} \mid f(x) = -1\} / \{\pm 1\},
\]
with the Riemannian structure induced by setting $T_x\Hy_f = x^{\perp_f}$, the $f$\nobreakdash-orthogonal complement of $x$.
In this model, the isometry group \linebreak $\Isom(\Hy_f)$ is identified with $\aPO_f(\R)$, the real points of 
the algebraic group $\aPO_f$.
Observe that $\aPO_f$ is defined over the subfield $K\subset \R$ whenever $f$ is.

\subsection{} 
Following Vinberg \cite{Vinberg}, we define the \emph{(adjoint) trace field} of a \linebreak Zariski-dense 
subgroup $\Gamma \subset G$ 
as the field $K = \Q(\tr \Ad(\gamma) \mid \gamma \in \Gamma)$, where $\Ad$ denotes the adjoint representation.
If $\Gamma$ is a lattice, $K$ is a number field \cite[Chap.~1, §6]{Vinberg-Lie-Groups}.

The \emph{(adjoint) trace ring} of $\Gamma$ is the ring $A$ defined as the integral closure of 
$\Z[\tr \Ad(\gamma)\mid \gamma \in \Gamma]$.
When $K$ is a number field (in particular, when $\Gamma$ is a lattice), we simply have 
\[
    A = \calO_K[\tr \Ad(\gamma) \mid \gamma \in \Gamma],
\]  
where $\calO_K$ denotes the ring of integers of $K$.
Both the trace field and the trace ring are invariant under commensurability \cite[Th.~3]{Vinberg} (recall that two subgroups of a group are \emph{commensurable} if they share a 
finite-index subgroup, up to conjugation).

Finally the \emph{ambient group} of $\Gamma$ is any algebraic group $\G$ defined over $K$ such that 
$\G(\R) \cong G = \Isom(\Hy^n)$ and $\Gamma \subset \G(K)$ via this isomorphism.
Such a group always exists (take the Zariski-closure of the image of $\Gamma$ under the representation given 
by \cite[Th.~1]{Vinberg}).
Furthermore, by Zariski-density of $\Gamma$ it is unique up to $K$-isomorphism, and up to commensurability
we actually have $\Gamma \subset \G(A)$.

\subsection{}
With this machinery, it is easy to define arithmetic lattices.
A lattice $\Gamma \subset G = \Isom(\Hy^n)$ is \emph{arithmetic} if:
\begin{enumerate}
    \item its trace field $K$ is a totally real number field,
    \item its trace ring is $\calO_K$,
    \item its ambient group $\G$ is \emph{admissible}, meaning that $\G(k \otimes_\Q \R)$ is isomorphic 
        (as a Lie group) to $G \times K$ with $K$ compact.
\end{enumerate}
Recall that here $\G(k \otimes_\Q \R) = \prod_\sigma {}^\sigma \G (\R)$, where the product is over all 
embeddings $\sigma\:K \inj \R$, and ${}^\sigma \G$ denotes the $\sigma$-conjugate group of $\G$.

In this article, the only arithmetic lattices we will consider are those of the \emph{simplest type}, meaning that 
their ambient group $\G$ is isomorphic to $\aPO_f$ for some quadratic form $f$ defined over the trace field $K$.
For such a lattice $\Gamma$, the \emph{arithmetic manifold} $M = \Gamma \bs \Hy_f$ will be non-compact 
if and only if $f$ is isotropic (see \cite[Th.~11.6]{BHC}).
For $n \geq 4$ this forces that (and, for the simplest type, is actually equivalent to) $K = \Q$.

\subsection{}
Let $f$ be a signature $(n,1)$ quadratic form.
A \emph{hyperplane} of $\Hy_f$ is a codimension one totally geodesic subspace.
Equivalently, it is the image in $\Hy_f$ of the $f$-orthogonal complement $v^{\perp_f}$ of a vector $v$ with 
$f(v) >0$.
Two hyperplanes $R_1 = v_1^{\perp_f}$ and $R_2 = v_2^{\perp_f}$ are either \emph{incident} if they meet in 
$\Hy_f$, \emph{asymptotically parallel} if they meet at infinity, or \emph{ultraparallel} otherwise. 
These correspond to the cases when the square of the $f$-scalar product $\la v_1, v_2\ra_f ^2$ is greater than, equal to, or smaller than 
$f(v_1)f(v_2)$ respectively, see \cite[Th.~3.2.7]{Ratcliffe}.
In the ultraparallel case, there is a unique shortest geodesic between $R_1$ and $R_2$ and orthogonal to both.
Its length is the \emph{(hyperbolic) distance} $d(R_1, R_2)$ between $R_1$ and $R_2$, and we have 
(see \cite[Th.~3.2.8]{Ratcliffe})
\[
    \cosh d(R_1,R_2)= \frac{|\la v_1, v_2 \ra_f|}{(f(v_1)f(v_2))^{1/2}}.
\]
The image of a hyperplane $R \subset \Hy_f$ in a manifold $M = \Gamma \bs \Hy_f$ is a \emph{hypersurface} 
if the composite $R \inj \Hy_f \surj \Gamma \bs\Hy_f$ is an immersion.

Finally, if $M= \Gamma \bs \Hy_f$ is a manifold, its \emph{systole} is the length of the shortest geodesic 
in $M$.
It actually equals the \emph{minimal translation length} of $\Gamma$, defined as 
$\min\{d(x, \gamma x) \mid x \in \Hy_f, \gamma \in \Gamma$ hyperbolic$\}$.
Note that if $M$ is arithmetic of trace field $K$, there is a constant $\epsilon$ depending only on $K$ and 
the dimension $n$ such that the systole of $M$ is at least $\epsilon$ (see Remark~5.7 and the remark after 
Conjecture 10.4 in \cite{Gelander}). 

\subsection{}
The manifolds we will be interested in (we call them \emph{doubly-cut gluings})
arise from a specific gluing construction we now describe.
They were introduced by Belolipetsky and Thomson \cite{BT}, generalizing ideas of Agol \cite{Agol}.
We merely outline their construction here, and refer to \cite{Mila18} for a precise definition.

Let $\Gamma \subset \aPO_f(\calO_K)$ be an arithmetic lattice and let $R_1 = v_1^{\perp_f}$ and $R_2 = v_2^{\perp_f}$ be hyperplanes that are not incident (in $\Hy_f$).
Assume that both vectors $v_1, v_2$ are actually in $K^{n+1}$; we will say that $R_1$ and $R_2$ are 
\emph{rational} or \emph{$K$-rational}.
Then a \emph{doubly-cut gluing} constructed from this data is a manifold obtained using the following procedure:
\begin{enumerate}
    \item Select a finite-index subgroup $\Gamma_1 \subset \Gamma$ such that 
        the hyperplanes $R_1$ and $R_2$ project down to disjoint \emph{hypersurfaces} in 
        $M_1 = \Gamma_1 \bs \Hy_f$.
    \item Cut the manifold $M_1$ open at the two hypersurfaces and form its completion $M_2$; it is a manifold 
        with boundary.
    \item Form its \emph{double} $M_3$ by identifying the boundary components of two mirrored copies of $M_2$.
\end{enumerate}
The hyperplanes $R_1$ and $R_2$ are the \emph{cut hyperplanes}.
See \cite[Sect.~1.2]{Mila18} for a justification of the existence of the subgroup $\Gamma_1$ as well as other 
technical details.

If $R_1$ and $R_2$ are ultraparallel, the image in $M_3$ of the geodesic segment orthogonal to $R_1$ and 
$R_2$, together with its ``mirrored copy'', forms a closed geodesic in $M_3$ of length $2d(R_1, R_2)$.
Thus with hyperplanes getting closer and closer we get doubly-cut gluings with shorter and shorter systole.



\section{The non-compact case} \label{sec:non-compact}
In this section we prove the following theorem.

\begin{theorem} \label{th:non-compact}
  For every $n \geq 3$, there exist infinitely many \emph{non-compact} pairwise non-commensurable hyperbolic 
  $n$-manifolds having the same ambient group and trace ring.
\end{theorem}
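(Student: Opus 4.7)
The plan is to realize the manifolds as doubly-cut gluings (in the sense recalled in Section~\ref{sec:background}) of a fixed non-compact arithmetic $n$-manifold, and to distinguish their commensurability classes by a short-systole argument. The starting point is the rational isotropic quadratic form $f(x) = -x_0^2 + x_1^2 + \cdots + x_n^2$, which is $\Q$-isotropic for every $n \geq 3$ (the vector $(1,1,0,\ldots,0)$ is isotropic), together with the arithmetic lattice $\Gamma_0 = \aPO_f(\Z)$: its quotient $M_0 = \Gamma_0 \bs \Hy_f$ is non-compact with trace field $\Q$, trace ring $\Z$, and ambient group $\aPO_f$.

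The crucial step is to exhibit an infinite family $\{(R_1^{(i)}, R_2^{(i)})\}_{i \geq 1}$ of pairs of $\Q$-rational ultraparallel hyperplanes in $\Hy_f$ whose distances $d_i = d(R_1^{(i)}, R_2^{(i)})$ tend to zero, while the defining vectors $v_1^{(i)}, v_2^{(i)} \in \Q^{n+1}$ are arranged so that the trace ring analysis of \cite{Mila18} assigns the \emph{same} trace ring $A$ to every resulting doubly-cut gluing $M^{(i)}$. Since $s_{v_1^{(i)}} s_{v_2^{(i)}}$ is a hyperbolic translation of length $2d_i$ along the common perpendicular of the two cut hyperplanes, the quantity $\cosh 2 d_i$ appears explicitly in the trace of any faithful representation; holding the trace ring $A$ fixed therefore amounts to controlling the allowed denominators of $\cosh 2 d_i$, which can be arranged by prescribing the $f$-norms $f(v_j^{(i)})$ and the inner products $\la v_1^{(i)}, v_2^{(i)} \ra_f$ inside a suitable single-prime localization of $\Z$. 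Granting this, each $M^{(i)}$ has ambient group $\aPO_f$ (the whole construction stays inside $\aPO_f(\Q)$) and trace ring $A$ by design, while $\syst(M^{(i)}) \leq 2 d_i \to 0$ because the doubled common perpendicular is a closed geodesic of length exactly $2 d_i$.

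To conclude pairwise non-commensurability I would combine Gelander's lower bound for systoles of arithmetic manifolds (recalled at the end of Section~\ref{sec:background}) with Margulis' commensurator theorem. The systole bound forces $M^{(i)}$ to be non-arithmetic for all $i$ sufficiently large. If some commensurability class contained infinitely many $M^{(i_k)}$, Margulis' theorem would furnish a maximal lattice $\Gamma^+$ in that class, into which each $\Gamma^{(i_k)}$ conjugates as a finite-index subgroup, so $\syst(\Gamma^+ \bs \Hy_f) \leq \syst(M^{(i_k)}) \leq 2 d_{i_k} \to 0$, contradicting the discreteness of $\Gamma^+$. Hence every commensurability class captures only finitely many $M^{(i)}$, producing infinitely many distinct classes. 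The main obstacle I anticipate is the first step: arranging $d_i \to 0$ while keeping the trace ring $A$ independent of $i$, which is precisely the algebraic input that must be extracted from the analysis of~\cite{Mila18}.
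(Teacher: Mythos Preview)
Your plan is correct and coincides with the paper's approach in every structural respect: doubly-cut gluings from $\aPO_f(\Z)$ for $f = -x_0^2 + x_1^2 + \cdots + x_n^2$, rational cut hyperplanes at distance tending to zero, trace-ring control via \cite{Mila18}, and the Margulis commensurator argument to separate commensurability classes. The obstacle you single out is exactly what the paper supplies explicitly: fix one hyperplane $R_0 = \{x_1=0\}$ and take $w_k = (c_k+1,\, b_k,\, c_k,\, 0,\dots,0)$ with $b_k^2 - 2c_k - 1 = r_k$ any prescribed sequence of $T$-smooth positive integers, so that $f(w_k) = r_k$ and hence (by \cite[Lemma~2.6]{Mila18}) every trace ring sits inside the fixed finitely generated ring $\Z[1/p : p \leq T]$. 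One small tactical difference: rather than forcing the trace rings to be equal from the outset, the paper only arranges them to lie in a common finitely generated subring of $\Q$ and then passes to a subsequence, using that such a ring has only finitely many subrings.
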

\begin{proof}
  Fix $n \geq 3$. 
  The manifolds we will construct are doubly-cut gluings with carefully chosen cut hyperplanes.
  Let $f = -x_0^2 + x_1^2 + \cdots + x_n^2$ be the standard signature $(n,1)$ quadratic 
  form, and $R_0 \subset \Hy_f$ the hyperplane defined by $R_0 = v^{\perp_f}$ with $v = (0,1,0, \dots, 0)$.
  For a rational hyperplane $R = w^{\perp_f}$, ultraparallel to $R_0$, we let $M_w$ denote a doubly cut gluing 
  constructed using the arithmetic manifold $\aPO_f(\Z)$ and the cut hyperplanes $R_0$ and $R$.

  The goal is to find a sequence $(w_k)_{k\geq 1} \subset \Z^{n+1}$ such that the following holds:
  \begin{enumerate}
      \item The squared norm $f(w_k) = \<w_k, w_k\>_f$ is positive, the hyperplanes $R_0$ and 
          $R_{k} = w_k^{\perp_f}$ are 
          ultraparallel and 
          \[
              d(R_0, R_k) \longrightarrow 0 \quad \text{as } k \to \infty.
          \]  
      \item The trace rings $A_k$ of the manifolds $M_{w_k} = \Gamma_{w_k} \bs \Hy_f$ 
          are all contained in a finitely generated subring $A$ of $\Q$.
  \end{enumerate}
  
  If we can find such a sequence, we can conclude using the following argument due to Agol~\cite{Agol} and 
  generalized by Belolipetsky and Thomson~\cite{BT}.
  First, observe that as the distance between $R_0$ and $R_k$ tends to zero, the systole of the manifolds 
  $M_{w_k}$ also tends to zero (see the explanations of the previous section), and thus they are 
  eventually all nonarithmetic.

  Assume there are only finitely many commensurability classes among the $M_{w_k}$.
  Up to passing to a subsequence, we can assume that all $M_{w_k}$ are commensurable and nonarithmetic.
  Hence the lattices $\Gamma_{w_k}$ are all contained in the commensurator $\Comm(\Gamma_{w_1})$ of, say, 
  $\Gamma_{w_1}$.
  Since $\Gamma_{w_1}$ is nonarithmetic, its commensurator is itself a lattice 
  (see \cite[Th.~1]{Margulis}), and thus has a positive minimal translation length.
  This contradicts the fact that the minimal translation length of $\Gamma_{w_k}$ tends to zero.
  
  Now up to finding a subsequence, we can assume that the $M_{w_k}$ are all pairwise non-commensurable.
  Since there are only a finite number of subrings of $A$, we can find a further subsequence such that 
  all elements $M_{w_k}$ have the same trace ring, completing the proof.

  We are left with finding such a sequence $(w_k)_{k\geq 1} \subset \Z^{n+1}$.
  First recall that, as explained above, 
  \[
      \left(\cosh d(R_0, R_k)\right)^2 = \frac{\<v,w_k\>_f^2}{f(v) f(w_k)} = \frac{w_{k,1}^2}{f(w_k)}.
  \]  
  Thus (a) is equivalent to 
  \begin{equation} \label{eq:1}
      w_{k,1}^2 > f(w_k) > 0 \quad \text{and} \quad \frac{w_{k,1}^2}{f(w_k)} \longrightarrow 1.
  \end{equation}
  (The first two inequalities mean that the hyperplanes are ultraparallel and that $w_k^{\perp_f}$ indeed 
  defines a hyperplane.)

  Using Lemma~2.6 of \cite{Mila18}, we get that the trace rings $A_k$ of the $M_{w_k}$ all satisfy:
  \[
      A_k \subset \Z\left[\frac{1}{f(w_k)}\right] = \Z\left[\frac{1}{p_1}, \dots, \frac{1}{p_r}
      \right],
  \]
  where $p_1, \dots, p_r$ are the prime factors of $f(w_k)$, and the equality follows from elementary 
  properties of subrings of $\Q$.
  Hence (b) can be achieved by finding a sequence $w_k$ such that all integers $f(w_k)$ are 
  $T$-smooth for some fixed constant $T$ (an integer is \emph{$T$-smooth}, or \emph{$T$-friable} if all its 
  prime factors are less than $T$).

  We are ready to construct our sequence $(w_k)_{k\geq 1}$.
  Let $T > 1$ be fixed and let $(r_k)_{k \geq 1}$ be any infinite sequence of positive $T$-smooth integers.
  For each $r_k$ we can find integers $b_k > 0$ and $0 \leq c_k \leq 2 b_k$ such that 
  \[
    r_k = b_k^2 - 2 c_k - 1, 
  \]  
  since all positive integers are contained in the union 
  \[
    \bigcup_{b \in \Z, b > 0} \{ b^2 - 2c -1 \mid c \in \Z, 0 \leq c \leq 2 b\}.
  \]  
  Defining
  \[
      w_k = (c_k + 1\,,\, b_k\,, \,c_k\,, 0, \dots, 0) \in \Z^{n+1},
  \]
  we see that 
  \[
      f(w_k) = -(c_k+1)^2 + b_k^2 + c_k^2 = b_k^2 -2 c_k - 1 = r_k
  \]  
  and hence all these square norms are $T$-smooth.

  Moreover, it is obvious that for our choices of $b_k, c_k$, we have
  \[
    b_k^2 > b_k^2 -2 c_k - 1 > 0,
  \]  
  and since 
  \[
    \frac{b_k^2}{b_k^2 - 2 c_k - 1} = \frac{b_k^2}{b_k^2 + o(b_k^2)} \longto 1,
  \]  
  the proof is complete.
\end{proof}
   
\begin{remark}
  If we let $r_k = p^k $ for some fixed prime $p$, we get that all our manifolds have trace ring 
  \emph{exactly} $\Z[\frac{1}{p}]$.
\end{remark}

\section{The compact case} \label{sec:compact}

This method can be adapted to work over number fields, thus producing compact manifolds with same 
ambient group and trace rings:

\begin{theorem} \label{th:compact}
  For every $n \geq 4$, there exist infinitely many \emph{compact} pairwise non-commensurable hyperbolic 
  $n$-manifolds having the same ambient group and trace ring.
\end{theorem}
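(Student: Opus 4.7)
The plan is to mirror the proof of Theorem~\ref{th:non-compact} with $\Q$ replaced by a totally real number field $K \neq \Q$ over which cocompact arithmetic lattices of simplest type exist. Concretely, take $K = \Q(\sqrt{2})$ and $f = -\sqrt{2}\,x_0^2 + x_1^2 + \cdots + x_n^2$: at the identity embedding $f$ has signature $(n,1)$, at the Galois conjugate it is positive definite, so $f$ is admissible and anisotropic, and $\aPO_f(\calO_K)$ is a cocompact arithmetic lattice for $n \geq 4$. Set $v = (0,1,0,\ldots,0)$ and $R_0 = v^{\perp_f}$, and for $w \in \calO_K^{n+1}$ such that $R_w = w^{\perp_f}$ is ultraparallel to $R_0$, write $M_w$ for a doubly-cut gluing of $\aPO_f(\calO_K)$ with cut hyperplanes $R_0$ and $R_w$. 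I will aim to produce a sequence $(w_k) \subset \calO_K^{n+1}$ such that (a) $d(R_0, R_{w_k}) \to 0$ at the identity embedding, and (b) all the $f(w_k)$ are supported, as elements of $\calO_K$, on a fixed finite set $S$ of prime ideals.

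Once such a sequence is in hand, Lemma~2.6 of~\cite{Mila18} (which is stated over an arbitrary trace field) yields $A_k \subset \calO_K[1/f(w_k)] \subset \calO_K[S^{-1}]$ for each trace ring $A_k$, and the Margulis commensurator argument of Theorem~\ref{th:non-compact} carries over with only cosmetic changes: shrinking systoles force eventual nonarithmeticity, since the arithmetic systole over $K$ and dimension $n$ is bounded below; finitely many commensurability classes are excluded by the lattice property of commensurators of nonarithmetic lattices (\cite{Margulis}); and since $\calO_K[S^{-1}]$ has only finitely many subrings, one extracts a final subsequence with a common trace ring.

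The main obstacle is constructing such a sequence. Paralleling the non-compact ansatz, take $w_k = (c_k+1,\, b_k,\, c_k,\, 0, \ldots, 0)$ with $b_k, c_k \in \calO_K$, so $\<v, w_k\>_f = b_k$ and
\[
    f(w_k) = b_k^2 + c_k^2 - \sqrt{2}(c_k+1)^2.
\]
Condition (a) then reads $b_k^2 > f(w_k) > 0$ and $b_k^2/f(w_k) \to 1$ at the identity embedding, while (b) asks that the prime ideal factors of $f(w_k)$ lie in $S$. The key new resource is Dirichlet's unit theorem: $\calO_K^\times$ is infinite, so there is a totally positive unit $\varepsilon$ of infinite order (e.g.\ $\varepsilon = (1+\sqrt{2})^2$), and it suffices to arrange $f(w_k) = r_0\,\varepsilon^{2k}$ for a fixed totally positive $S$-integer $r_0$.

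The technical heart of the proof is then to realize these prescribed values by integer solutions $b_k, c_k$ while simultaneously balancing both embeddings: at the identity one needs $|b_k|$ of order $\varepsilon^k$ so that $b_k^2/f(w_k) \to 1$, whereas at the Galois conjugate the sum $\sqrt{2}(c_k+1)^2 + b_k^2 + c_k^2$ must equal $\overline{r_0}/\varepsilon^{2k}$, forcing the conjugates of $b_k, c_k$ to be of order $\varepsilon^{-k}$. This is precisely the sort of Diophantine problem that is tractable in $\Q(\sqrt{2})$ (one essentially parametrizes by unit-like elements and uses that the solvability is stable under multiplication by $\varepsilon$); a careful verification that infinitely many $k$ admit such $(b_k, c_k) \in \calO_K^2$ yields the sequence, and the Margulis-style conclusion of Theorem~\ref{th:non-compact} then finishes the proof.
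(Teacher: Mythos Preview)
Your overall plan (mirror the non-compact argument over a real quadratic field, control the prime support of $f(w_k)$, then run the Margulis commensurator endgame) is exactly the paper's strategy. But the specific implementation you propose cannot work, and the gap is not a matter of ``careful verification'': the three-coordinate ansatz $w_k=(c_k+1,\,b_k,\,c_k,\,0,\dots,0)$ is incompatible with your choice of target values $f(w_k)=r_0\,\varepsilon^{2k}$ for a totally positive \emph{unit} $\varepsilon$. At the non-identity embedding $\sigma$ the form becomes positive definite, so
\[
  \sigma\bigl(f(w_k)\bigr)=\sqrt{2}\,\bigl(\sigma(c_k)+1\bigr)^2+\sigma(b_k)^2+\sigma(c_k)^2
  \;\geq\; \min_{t\in\R}\bigl(\sqrt{2}(t+1)^2+t^2\bigr)=2-\sqrt{2}>0,
\]
since the first and third coordinates are rigidly linked by $\sigma(c_k+1)=\sigma(c_k)+1$. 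On the other hand $N(\varepsilon)=1$ forces $|\sigma(\varepsilon)|<1$, so $\sigma(r_0)\,\sigma(\varepsilon)^{2k}\to 0$. Hence for large $k$ there are \emph{no} $b_k,c_k\in\calO_K$ with $f(w_k)=r_0\varepsilon^{2k}$, and your claimed Diophantine step fails outright. (Your own remark that the conjugates of $b_k,c_k$ must be of order $\varepsilon^{-k}$ already contains the contradiction: then $\sigma(c_k+1)\to 1$, not $0$.)

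The paper's proof avoids this obstruction by two changes you are missing. First, it takes target values $\rho^k$ for a \emph{non-unit} $\rho$ with $\sigma(\rho)>1$ as well, so the conjugate side is allowed to grow; the trace rings still land in $\calO_K[1/\rho]$. Second, and more importantly, it abandons the rigid three-coordinate ansatz in favour of $w_k=(\alpha_k,\beta_k,\gamma_1,\gamma_2,\gamma_3,0,\dots,0)$, constructs $\alpha_k,\beta_k$ so that $-\sqrt{5}\,\alpha_k^2+\beta_k^2=\rho^k-\epsilon$ with a totally positive error $\epsilon$, and then invokes \emph{Maass' three-squares theorem} over $\Q(\sqrt{5})$ to write $\epsilon=\gamma_1^2+\gamma_2^2+\gamma_3^2$. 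This representation theorem is the key missing idea; it is also why the paper works over $\Q(\sqrt{5})$ rather than $\Q(\sqrt{2})$ (by Siegel, $\Q$ and $\Q(\sqrt{5})$ are the only totally real fields where every totally positive integer is a sum of integral squares), and why the compact theorem requires $n\geq 4$ (three extra coordinates are needed for the squares).
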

\begin{proof}
    Let $K = \Q(\sqrt{5})$, and let $f$ denote the following quadratic form in $n+1 \geq 4$ variables:
    \[
        f = -\sqrt{5} \, x_0^2 + x_1^2 + \cdots + x_n^2.
    \]  
    As in the proof of the previous theorem, we will construct a sequence of doubly-cut gluings 
    $(M_{w_k})_{k \geq 1}$, this time using the $f$-hyperboloid model $\Hy_f$ for hyperbolic space.
    The cut hyperplanes for $M_{w_k}$ will be $R_0$ and $R_k$, where 
    $R_0 = \{x_1 = 0\} = v^{\perp_f} \subset \Hy_f$, with $v = (0,1,0, \dots, 0)$ and 
    $R_k= w_k^{\perp_f}$ is to be determined.
    
    Let $w = (\alpha,\beta, \gamma_1, \gamma_2, \gamma_3, 0, \dots,0) \in \calO_K^{n+1}$, 
    where $\calO_K = \Z\left[\frac{1 + \sqrt{5}}{2}\right]$ is the ring of integers of $K$.
    By Maass' Theorem (see \cite[Th.~14.3.2, p.~193]{Grosswald}), any totally positive element of $\calO_K$ 
    can be realized as a sum of three squares in $\calO_K$.
    Thus for any totally positive $\epsilon \in \calO_K$, we can find values for
    $\gamma_1, \gamma_2, \gamma_3$ 
    such that 
    \[
        f(w) = -\sqrt{5} \alpha^2 + \beta^2 + \epsilon
    \] 
    We will take $R_k = w_k^\perp$ for some $w_k$ of the above form.
    As before, our goal is to find $w_k$ such that the systole of $M_{w_k}$ goes to zero 
    while its trace ring 
    remains fixed.

    Let $\rho \in \Z[\sqrt{5}]$ be such that $\sigma(\rho)^2 > \rho > \sigma(\rho) > 1$, where 
    $\sigma$ is the non-trivial automorphism of $K$.
    (For instance, one can take $\rho = 6 + \sqrt{5}$).
    Write $\rho^k = u_k + \sqrt{5}\, v_k$ with $u_k, v_k \in \Z$, and define 
    \[
        \begin{split}
            \alpha_k &= \lceil x\rceil + \sqrt{5} \quad \text{and} \quad
            \beta_k = \left\lfloor \sqrt{5} y \right \rfloor + \sqrt{5} y 
        \quad \text{where}\\
            x &= \sqrt{\frac{\sigma(\rho^k)}{\sqrt{5}}}, \quad \text{and} \quad
            y = \left\lfloor \sqrt{\frac{u_k}{10}} \right \rfloor.
        \end{split}
    \]
    A tedious but straightforward calculation shows that 
    \[
        -\sqrt{5}\, \alpha_k^2 + \beta_k^2 = \rho^k - \epsilon
    \]
    where $\epsilon \in \Z[\sqrt{5}]$ is in $O(\beta_k)$ 
    and totally positive for $k$ large enough. 

    From the discussion above, it follows that there are $\gamma_1, \gamma_2, \gamma_3 \in \calO_K$ such that 
    if $w_k = (\alpha_k, \beta_k, \gamma_1, \gamma_2, \gamma_3, 0, \dots, 0)$ we have
    \[
      f(w_k) = \rho^k - \epsilon + \epsilon = \rho^k.
    \]  
    
    We are ready to conclude the proof.
    As previously, the conditions that $R_k$ is a hyperplane not intersecting $R_0$, and that the systole of 
    $M_{w_k}$ tends to zero can be fulfilled by choosing $w_k$ satisfying equation \eqref{eq:1} 
    in the proof of Theorem~\ref{th:non-compact}.
    We claim that this is the case for our $w_k$, for $k$ large enough.

    To prove that $\frac{w_{k,1}^2}{f(w_k)} \to 1$, observe first that
    \[
        \frac{\sqrt{5}\,v_k}{u_k} = \frac{\rho^k - \sigma(\rho^k)}{\rho^k + \sigma(\rho^k)}
        = \frac{1 - (\frac{\sigma(\rho)}{\rho})^k}{1 + (\frac{\sigma(\rho)}{\rho})^k} \longto 1.
    \]  
    From this we get that 
    \[
        \frac{f(w_k)}{w_{k,1}^2} = \frac{\beta_k^2 - \sqrt{5}\,\alpha_k^2 + O(\beta_k)}{\beta_k^2}
        = 1 - \frac{\sqrt{5}\,\alpha_k^2}{\beta_k^2} + o(1) \longto 1,
    \]  
    since 
    \[
        \frac{\alpha_k^2}{\beta_k^2} = \frac{1}{2\sqrt{5}}
        \frac{u_k - \sqrt{5}\,v_k + O(\sqrt{u_k})}{u_k + O(\sqrt{u_k})}
        = \frac{1}{2\sqrt{5}} \left(1 - \frac{\sqrt{5}\, v_k }{u_k+ o(u_k)}\right) \longto 0.
    \]
    Thus $\frac{w_{k,1}^2}{f(w_k)} \to 1$.

    It is clear that $f(w_k) = \rho^k >0$.
    To show that $w_{k,1}^2 > f(w_k)$, observe first that 
    \[
        \frac{\sigma(\rho)^{2k}}{u_k} \geq \frac{\sigma(\rho)^{2k}}{\rho^k} = 
        \left(\frac{\sigma(\rho)^2}{\rho} \right)^k \longto \infty,
    \]
    since $\sigma(\rho)^2 > \rho$ by hypothesis.
    It follows that 
    \[
        f(w_k) - w_{k,1}^2 = f(w_k) - \beta_k^2 = -\sqrt{5}\,\alpha_k^2 + O(\beta_k)
    \]  
    is negative for large enough $k$, since
    \[
        \frac{\alpha_k^4}{\beta_k^2} = \frac{1}{10} 
        \frac{\sigma(\rho^{2k}) + O(u_k^{3/2})}%
        {u_k + O(u_k^{1/2})} \longto +\infty.
    \]  

    We have proven that equation \eqref{eq:1} holds for $w_k$, for $k$ large enough.
    It now follows from Lemma 1.1 in \cite{Mila18} that the trace ring $A_k$ of $M_k$ satisfies 
    \[
        A_k \subset \calO_K\left[ \frac{1}{f(w_k)} \right] 
        = \calO_K\left[ \frac{1}{\rho^k} \right] 
        = \calO_K\left[ \frac{1}{\rho} \right].
    \]
    As there are only finitely many integrally closed subrings of 
    $\calO_K[\frac{1}{\rho}]$, we conclude as in the proof of Theorem~\ref{th:non-compact}.
\end{proof}
\begin{remark}
    The same proof idea works over any quadratic number field. 
    However, there is no analogue of Maass' Theorem here (in fact, Siegel showed that $\Q$ and $\Q(\sqrt{5})$
    are the only totally real fields whose totally positive integers are sums of squares of integers, 
    see \cite[Th.~14.3.3]{Grosswald}).

    To palliate this, one can consider instead quadratic forms of the shape 
    \[
        -\sqrt{d} x_0^2 + x_1^2 
        + (\big\lceil\sqrt{d}\big\rceil - \sqrt{d})(x_2^2 + \cdots + x_5^2) + x_6^2 + \cdots + x_n^2.
    \]
    Here for $n \geq 9$, the above error $\epsilon$ can be compensated by using the 
    Four Squares Theorem twice, once with 
    variables $x_2, \dots, x_5$ and once with $x_6, \dots, x_9$.
    From there, essentially the same method works to produce compact examples.
    
    Also, it is to be noted that there is nothing special about the case $n = 3$ in our context, 
    and with more precise calculations one could probably include it in the compact version.
\end{remark}

{\small
\bibliographystyle{abbrv}
\bibliography{bibliography} 
}

\end{document}